\newcommand{\removelatexerror}{\let\@latex@error\@gobble}
  \pgfplotsset{compat=newest}
  \pgfplotsset{plot coordinates/math parser=false}
\newtheorem{theorem}{Theorem}[section]
\newtheorem{proposition}[theorem]{Proposition}
\newtheorem{lemma}[theorem]{Lemma}
\newtheorem{corollary}[theorem]{Corollary}
\theoremstyle{definition}
\newtheorem{remark}[theorem]{Remark}
\newcommand{\EE}{\mathcal{E}}
\newcommand{\GG}{\mathcal{G}}
\newcommand{\HH}{\mathcal{H}}
\newcommand{\PP}{\mathcal{P}}
\newcommand{\VV}{\mathcal{V}}
\newcommand{\WW}{\mathcal{W}}
\newcommand{\FF}{\mathcal{F}}
\newcommand{\NN}{\mathcal{N}}
\newcommand{\II}{\mathcal{I}}
\renewcommand{\SS}{\mathcal{S}}
\newcommand{\CVaR}{\operatorname{CVaR}}
\newcommand{\VI}{\operatorname{VI}}
\newcommand{\SOL}{\operatorname{SOL}}
\newcommand{\argmin}{\operatorname{argmin}}
\newcommand{\real}{\mathbb{R}}
\newcommand{\realnonnegative}{{\mathbb{R}}_{\ge 0}}
\newcommand{\naturalnumbers}{\mathbb{N}}
\newcommand{\setdef}[2]{\{#1 \; | \; #2\}}
\newcommand{\setdefb}[2]{\Bigl\{#1 \; \big| \; #2\Bigr\}}
\newcommand{\map}[3]{#1:#2 \rightarrow #3}
\newcommand{\Pb}{\mathbb{P}}
\newcommand{\norm}[1]{\ensuremath{\| #1 \|}}
\newcommand{\until}[1]{[#1]}
\newcommand{\CVaRhat}{\widehat{\CVaR}}
\newcommand{\Fhat}{\widehat{F}}
\newcommand{\cl}{\mathrm{cl}}
\newcommand{\oprocendsymbol}{\hbox{$\bullet$}}
\newcommand{\oprocend}{\relax\ifmmode\else\unskip\hfill\fi\oprocendsymbol}
\newcommand{\longthmtitle}[1]{\mbox{}\textup{\textsl{(#1):}}}
\title{Stochastic approximation for CVaR-based variational inequalities} 
\author{Jasper Verbree \qquad Ashish Cherukuri \thanks{The authors are with the Engineering and Technology Institute Groningen, University of Groningen. Email: \texttt{\{j.verbree, a.k.cherukuri\}@rug.nl.}}}
\begin{document}

\maketitle

\begin{abstract}
In this paper we study variational inequalities (VI) defined by the conditional value-at-risk (CVaR) of uncertain functions. We introduce stochastic approximation schemes that employ an empirical estimate of the CVaR at each iteration to solve these VIs. We investigate convergence of these algorithms under various assumptions on the monotonicity of the VI and accuracy of the CVaR estimate. Our first algorithm is shown to converge to the exact solution of the VI when the estimation error of the CVaR becomes progressively smaller along any execution of the algorithm. When the estimation error is nonvanishing, we provide two algorithms that provably converge to a neighborhood of the solution of the VI. For these schemes, under strong monotonicity, we provide an explicit relationship between sample size, estimation error, and the size of the neighborhood to which convergence is achieved. A simulation example illustrates our theoretical~findings.
\end{abstract}

\section{Introduction} \label{section introduciton}
Variational inequality (VI) problems find application in a broad range of areas~\cite{FF-JSP:03}, e.g., in game theory, under mild conditions, solutions to a VI correspond to Nash equilibria of a game. Similarly, in routing games, the Wardrop equilibria are solutions to the VI formed using the costs of each path.
In real-life, utilities or costs of players involved in a game may be uncertain and decisions must be made under this uncertainty. The behavior of the players may then depend on their risk-preferences and the involved cost functions are then risk measures of uncertain costs. Equilibrium in such scenarios corresponds to the solution of a VI, where each component of the map defining the VI is the risk associated to an appropriately defined function. Motivated by this setup, we consider VIs defined by the conditional value-at-risk (CVaR) of uncertain costs and develop stochastic approximation (SA) schemes to solve~them. 

\subsubsection*{Literature review}
General risk-based VIs, including CVaR-based VIs, and their potential applications are discussed in \cite{UR:14}. SA schemes are quite popular for solving \textit{stochastic variational inequality} (SVI) problems, see e.g.~\cite{UVS:13,YC-GL-YO:14} and references therein. In an SVI, the map associated to the VI is usually the expectation of an uncertain function and hence, an unbiased estimator of the map is available using a single sample of the uncertainty. As a result, the associated SA scheme enjoys strong convergence guarantees under fairly mild assumptions. However, this property does not hold in general for CVaR-based VIs. Instead, in our work, we use a finite number of samples to determine an empirical estimate of the CVaR. Such an estimator is biased but consistent and we use results from~\cite{YW-FG:10, RPR-NDS:10} to bound the deviation of the estimator from the true value of the CVaR. Recent works~\cite{RKK-LAP-SPB-KJ:19, LAP-KJ-RKK:19} generalize such bounds for more general distributions.

Closely related to our work,~\cite{AC:19-cdc} provides a sample average approximation (SAA) method for computing the solutions of the CVaR-based VI. In the SAA method, the CVaR is replaced with its empirical estimate and the solution of the VI formed using these empirical estimates is used to approximate the solution of the original problem. 
Our approach to analyze the convergence of the SA schemes proposed here involves approximating the asymptotic behaviour of a scheme by a trajectory of a continuous-time dynamical system and inferring convergence from the stability properties of the dynamical system~\cite{HJK-DSC:78},~\cite{VB:08}. 
In other related works,~\cite{AT-YC-MG-SM:17} and~\cite{CJ-LAP-FM-SM-CS:18} provide sample-based schemes for optimizing the CVaR and other general risk measures, respectively.

\subsubsection*{Statement of Contributions}
We start by defining the CVaR-based variational inequality (VI) where the map defining the VI consists of components that are the CVaR of uncertain functions. Our first contribution is the design of a ``vanilla'' stochastic approximation algorithm that, under strict monotonicity, asymptotically converges to a solution of the VI problem. The algorithm employs a sample-based estimator of the CVaR at each iteration. For convergence, the algorithm requires unbounded growth of the sample size as the iterations proceed.
To handle this limitation, our second and third contributions are the design of two stochastic approximation schemes, termed penalty-driven and multiplier-driven algorithms, that use the same estimator of the CVaR but use a bounded number of samples at each iteration. Under strict monotonicity, these algorithms are shown to converge asymptotically to a neighborhood of the solution set, the size of which can be tuned. The penalty-driven algorithm allows iterates to venture outside the set defining the VI but controls the deviation using a penalty term. On the other hand, the multiplier-driven algorithm, akin to primal-dual methods in optimization, ensures convergence of iterates to the set using multiplier variables. Our final contribution investigates the dependence of the size of the neighborhood that the iterates converge to on the empirical estimation error, the sample-size at each iteration, and the strong monotonicity parameter. 
A simulation example illustrates our result. 

\subsubsection*{Organization}
The paper is organized as follows. Section~\ref{section preliminaries} presents the notation and basic concepts on variational inequalities and the conditional value-at-risk. Section~\ref{section problem statement} describes the problem setup and provides a motivating example. Section~\ref{section stochastic approximation algorithms} proposes three stochastic approximation algorithms for solving the CVaR-based VI and provides, for two algorithms, the relationship between the estimation error, the sample size, and the accuracy. Section~\ref{section simulations} presents a simulation example. Finally, Section~\ref{section conclusion} describes our conclusions and ideas for future work.
\section{Preliminaries} \label{section preliminaries}
Throughout this paper we use the following notation. Let $\mathbb{R}$ and $\mathbb{N}$ denote the real and natural numbers, respectively. For $N \in \naturalnumbers$, we let $\until{N} := \{1, 2, \dots, N\}$. For given $x \in \mathbb{R}$, we use the notation $[x]_+ := \max(x,0)$. For $x \in \real^n$, we let $x_i$ denote the $i$-th element of $x$, and the $i$-th element of the vector $[x]_+$ is $[x_i]_+$. The Euclidean $2$-norm of $x$ is given by $\norm{x}$. For scalars $x, y \in \real$, the operator $[x]_{y}^+$ equals $x$ if $y > 0$ and it equals $\max\{0,x\}$ if $y = 0$. 
For vectors $x,y \in \real^n$, $[x]_{y}^+$ denotes the vector whose $i$-th element is $[x_i]_{y_i}^+$. The open $\epsilon$ neigborhood of $x$ is defined as ${\NN_{\epsilon}(x) = \setdef{y \in \real^n}{\norm{y-x} < \epsilon}}$. The Euclidean projection of $x$ onto the set $\HH$ is denoted $\Pi_\HH(x):=\argmin_{y \in \HH} \norm{x-y}$. The closure of a set $\SS \subset \real^n$ is denoted by $\cl(\SS)$.
\subsection{Variational inequalities, monotonicity, and KKT points}\label{sec:vi-kkt}
For a given map $F: \mathbb{R}^n \rightarrow \mathbb{R}^n$ and a closed set $\HH \subseteq \mathbb{R}^n$, the associated \textit{variational inequality} (VI) problem, $\VI(F,\HH)$, is to find ${h^* \in \HH}$ solving
\begin{equation*}
    (h - h^*)^\top F(h^*) \geq 0 \text{ for all } h \in \HH.
\end{equation*}
The set of all points that solve $\VI(F,\HH)$ is denoted $\SOL(F,\HH)$. An important concept in the context of VI's is \textit{monotonicity} of the map $F$. The map $F$
is called \textit{monotone}~if
\begin{equation*} \label{strict monotonicity}
    \big(F(x)  -  F(y)\big)^\top  (x  -  y)  \geq  0
\end{equation*}
holds for all ${x,y \in \mathbb{R}^n}$. If the inequality is strict for $x \not = y$, then $F$ is \textit{strictly monotone}. Similarly, we call $F$ \textit{strongly monotone} with constant ${c_F  >  0}$ if 
\begin{equation*}
    \big(F(x) - F(y)\big)^\top  (x-y)  \geq  c_F\norm{x - y}^2 
\end{equation*}
holds for all ${x, y \in \mathbb{R}^n}$. 
If $\HH$ is nonempty, compact, and $F$ 
is continuous, then $\SOL(F,\HH)$ is nonempty.
If $F$ is strictly monotone, then $\VI(F,\HH)$ has at most one solution \cite[Theorems 2.1 \& 2.2]{AN-DZ:96}.
Under the \textit{linear independence constraint qualification} (LICQ), we next characterize $\SOL(F,\HH)$ as the set of Karush-Kuhn-Tucker (KKT) points of $\VI(F,\HH)$.
\begin{lemma}\longthmtitle{KKT points of $\VI(F,\HH)$}\label{le:KKT}
Let 
    \begin{equation} \label{explicit contrained set 2}
    \HH := \setdef{h \! \in \! \real^n \!}{q^j(h) \! \leq \! 0, \! \enskip l^k(h) \! = \! 0, \enskip \forall j \! \in \! [s], \! \enskip k \! \in \! [t]},
    \end{equation}
    where the functions $q^j$, $l^k: \mathbb{R}^n \rightarrow \mathbb{R}$, $j \in \until{s}$, $k \in \until{t}$, are convex and affine, respectively, and continuously differentiable. 
    For $q(h) \! := \! (q_1(h), \dots, q_s(h))^\top \! \in \! \real^s$, let $Dq(h) \! \in \! \real^{s \times n}$ be its Jacobian at $h$, and similarly $Dl(h)$. For any ${h^* \in \real^n}$, if there exists a multiplier $(\lambda^*, \mu^*) \in \real^s \times \real^t$ satisfying 
    \begin{equation} \label{KKT}
        \begin{split}
             &F(h^*) + (Dq(h^*))^\top \lambda^* + (Dl(h^*))^\top \mu^* = 0, \\
             l(h^*&) = 0, \quad q(h^*) \leq 0, \quad  \lambda^{*} \geq 0, \quad \lambda^{*\top}q(h^*) = 0,
        \end{split}
    \end{equation}
    then we have $h^* \in \SOL(F,\HH)$. Such a point $(h^*,\lambda^*, \mu^*)$ is referred to as a KKT point of the $\VI(F,\HH)$. Conversely, for $h^* \in \SOL(F,\HH)$, let $\II_{h^*} = \setdef{j}{q^j(h^*) = 0}$.
    If the vectors $\{\nabla q^j(h^*)\}_{j \in \II_{h^*}}$ and $\{\nabla l^k(h^*)\}_{k \in [t]}$ are linearly independent, or in other words, the LICQ holds at $h^*$, then there exists a multiplier $(\lambda^*, \mu^*)$ satisfying \eqref{KKT}.
\end{lemma}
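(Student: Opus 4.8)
The plan is to prove the two implications in the statement separately, the crux being a reformulation of $\SOL(F,\HH)$ as the solution set of a convex program with a \emph{linear} objective.

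For the sufficiency direction (KKT point $\Rightarrow$ solution), I would begin from the stationarity condition in~\eqref{KKT}, which gives $F(h^*) = -(Dq(h^*))^\top \lambda^* - (Dl(h^*))^\top \mu^*$. Substituting this into the VI inner product, for an arbitrary $h \in \HH$,
\[
(h-h^*)^\top F(h^*) = -\lambda^{*\top} Dq(h^*)(h-h^*) - \mu^{*\top} Dl(h^*)(h-h^*).
\]
The equality-constraint term vanishes: each $l^k$ is affine and $l(h) = l(h^*) = 0$, so $Dl(h^*)(h-h^*) = l(h) - l(h^*) = 0$. For the inequality-constraint term I would invoke convexity of each $q^j$ through the gradient inequality $Dq^j(h^*)(h-h^*) \le q^j(h) - q^j(h^*)$; multiplying by $\lambda^*_j \ge 0$, summing over $j$, and using complementary slackness $\lambda^{*\top} q(h^*) = 0$ together with $q(h) \le 0$ then yields $-\lambda^{*\top} Dq(h^*)(h-h^*) \ge 0$. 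Combining the two bounds gives $(h-h^*)^\top F(h^*) \ge 0$ for every $h \in \HH$, i.e.\ $h^* \in \SOL(F,\HH)$.

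For the necessity direction (solution $\Rightarrow$ KKT point under LICQ), the key observation is that $h^* \in \SOL(F,\HH)$ is equivalent to $h^*$ being a global minimizer over $\HH$ of the linear function $\phi(h) := F(h^*)^\top h$: the defining inequality $(h-h^*)^\top F(h^*) \ge 0$ is precisely $\phi(h) \ge \phi(h^*)$ for all $h \in \HH$. Since the $q^j$ are convex, the $l^k$ are affine, and $\phi$ is linear, this is a convex program to which classical first-order conditions apply. I would then invoke the standard KKT necessary-conditions theorem: because LICQ holds at $h^*$, there exist multipliers $(\lambda^*,\mu^*)$ satisfying $\nabla \phi(h^*) + (Dq(h^*))^\top \lambda^* + (Dl(h^*))^\top \mu^* = 0$, together with primal feasibility, $\lambda^* \ge 0$, and complementary slackness. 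Since $\nabla \phi(h^*) = F(h^*)$, this is exactly~\eqref{KKT}.

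The main obstacle lies in the necessity direction: one must recognize the VI solution as the minimizer of a linear objective over $\HH$ and then correctly appeal to the existence of KKT multipliers under LICQ. Once this reformulation is in place, multiplier existence follows directly from the classical first-order necessary optimality conditions, and no further work is required. The sufficiency direction, by contrast, is purely computational and hinges only on convexity of the $q^j$, affinity of the $l^k$, and the complementarity and sign structure of the multipliers.
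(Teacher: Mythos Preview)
Your proposal is correct and follows essentially the same approach as the paper: the paper handles sufficiency by citing a standard reference and, for necessity, makes precisely your key observation that $h^* \in \SOL(F,\HH)$ is equivalent to $h^*$ minimizing the linear function $y \mapsto y^\top F(h^*)$ over $\HH$, after which the classical KKT theorem under LICQ applies. The only difference is that you spell out the sufficiency computation explicitly (via convexity of $q^j$, affinity of $l^k$, and complementary slackness) rather than deferring to a citation, which is a welcome level of detail but not a different argument.
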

The above result is well-known in the context of convex optimization. The extension to the VI setting is forthright: the proof that the existence of a triplet $(\lambda^*,\mu^*,h^*)$ satisfying \eqref{KKT} is sufficient to guarantee $h^* \in \SOL(F,\HH)$ can be found in $\text{\cite[Proposition 3.46]{DS:18}}$. That the same condition is also necessary can be deduced from the result in the context of convex optimization (e.g. \cite[Theorem 12.1]{UF-WK-GS:10}) and noting that if $h^* \in \SOL(F,\HH)$, then it is also a minimizer of the function $y \mapsto y^\top F(h^*)$ subject to $y \in \HH$. 
\subsection{Conditional Value-at-Risk}
The \textit{Conditional Value-at-Risk} (CVaR) \textit{at level} $\alpha \in (0,1]$ of a real-valued random variable $Z$, defined on a probability space $(\Omega, \FF, \mathbb{P})$, is given by
\begin{equation*} 
\CVaR_\alpha[Z] := \inf_{t \in \mathbb{R}}\big\{t + \alpha^{-1} \mathbb{E}[Z - t]_+\big\},
\end{equation*}
where the expectation is with respect to $\Pb$.  The value $\alpha$ is a constant that characterizes risk-averseness, with smaller values of $\alpha$ giving a more risk-averse measure on $Z$. 
Given $N$ independent and identically distributed (i.i.d) samples $\{\widehat{Z}_j\}_{j \in [N]}$ of the random variable $Z$, one~can approximate $\CVaR_\alpha[Z]$ using the following empirical estimate
\begin{equation} \label{estimator CVaR}
    \CVaRhat^N_\alpha[Z] = \inf_{t \in \mathbb{R}}\big\{ t + (N \alpha)^{-1}  \textstyle\sum_{j=1}^N[\widehat{Z}_j - t]_+ \big\}.
\end{equation}
This estimator is biased, but consistent \cite[Chapter 6]{AS-DD-AR:14}. That is, the expected value of $\CVaRhat^N_\alpha[Z]$ is not necessarily equal to $\CVaR_\alpha[Z]$ and $\lim_{N \to \infty} \CVaRhat^N_\alpha[Z] = \CVaR_\alpha[Z]$ with probability one.

\section{Problem statement and motivating example} \label{section problem statement}
Consider a set of functions $C_i: \mathbb{R}^n \times \mathbb{R}^m \rightarrow \mathbb{R}$, ${i \in \until{n}}$, $(h,u) \mapsto C_i(h,u)$, where $u$ represents a random variable with distribution $\Pb$. For a fixed $h$, $C_i(h,u)$ is therefore a real-valued random variable. Define the map $\map{F_i}{\real^n}{\real}$ as the CVaR of $C_i$ at level $\alpha \in (0,1]$,
\begin{equation} \label{definition elements F}
       F_i(h) := \CVaR_\alpha\big[C_i(h,u)\big], \text{ for all } i \in \until{n}.
\end{equation}
 For notational convenience, let $C: \mathbb{R}^n \times \mathbb{R}^m \rightarrow \mathbb{R}^n$ and $\map{F}{\real^n}{\real^n}$ be the element-wise concatenation of the maps $\{C_i\}_{i \in \until{n}}$ and $\{F_i\}_{i \in \until{n}}$, respectively. Let $\HH \subseteq \real^n$ be a nonempty closed set.
The objective of this paper is to provide stochastic approximation (SA) algorithms to solve the variational inequality problem $\VI(F,\HH)$. Our strategy is to use an empirical estimator, derived from samples of $C(h,u)$, of the map $F$ at each iteration of the algorithm. Before we introduce the schemes, we will discuss a motivating example.

\subsubsection*{CVaR-based routing games~\cite{AC:19-cdc}} \label{section routing game}
Consider a directed graph $\GG = (\VV,\EE)$, where ${\VV = \until{\bar{n}}}$ is the set of vertices, and $\EE \subseteq \VV \times \VV$ is the set edges. To such a graph we associate a set $\WW \subseteq \VV \times \VV$ of origin-destination (OD) pairs. An OD-pair $w$ is given by an ordered pair $(v^w_o,v^w_d)$, where $v^w_o, v^w_d \in \VV$ are called the origin and the destination of $w$, respectively. The set of all paths in $\GG$ from the origin to the destination of $w$ is denoted $\PP_w$. The set of all paths
is given by $\PP = \cup_{w \in \WW} \PP_w$. Each of the participants, or agents, of the routing game is associated to an OD-pair, and can choose a path to travel from its origin to its destination. The choices of all agents give rise to a flow vector $h \in \mathbb{R^{|\PP|}}$. A common assumption in this context, which we will adopt here as well, is that the flow is non-atomic, meaning that each traffic participant controls an infinitesimal part of the flow. As a consequence, the flow $h$ is a continuous variable.

For each (OD)-pair $w$, a real value $d_w \geq 0$ defines the amount of traffic, or demand, associated to it.
The feasible set $\HH \subset \real^{|\PP|}$, containing all possible flows is then given by 
\begin{equation*}
\HH = \setdefb{h}{\sum_{p \in \PP_w}h_p = d_w, \enskip \forall w \in \WW, \enskip h_p \geq 0 \enskip \forall p \in \PP}.
\end{equation*}
To each of the paths $p \in \PP$, we associate a cost function $C_p: \mathbb{R}^{|\PP|} \times \mathbb{R}^{m} \rightarrow \mathbb{R}, (h,u) \mapsto C_p(h,u)$, which depends on the flow $h$, as well as on the uncertainty $u \in \mathbb{R}^m$.
Each agent chooses a path $p \in \PP_w$ that minimizes $\CVaR_\alpha\big[C_p(h,u)\big]$. These elements define the CVaR-based routing game to which we assign the following notion of equilibrium:
    the flow $h^* \in \HH$ is said to be a CVaR-based Wardrop equilibrium (CWE) of the CVaR-based routing game if the following hold for all $w \in \WW$:
    \begin{enumerate}
        \item $\sum_{p \in \PP_w} h^*_p = d_w$,
        \item $h^*_p > 0$ for $p \in \PP_w$ only if
        \begin{equation*}
            \CVaR_\alpha\big[C_p(h^*,u)\big] \leq \CVaR_\alpha\big[C_q(h^*,u)] \enskip \forall q \in \PP_w.
        \end{equation*}
    \end{enumerate}
The intuition behind this definition is that at equilibrium, for each agent, there is no path for which the CVaR of the cost is less than the CVaR of the cost on the selected path. Thus there is no incentive for the traffic participants to change their route choices. 
Under continuity of $C_p$, the set of CWE is equal to the set of solutions of $\VI(F,\HH)$, where $F: \mathbb{R}^{|\PP|} \rightarrow \mathbb{R}^{|\PP|}$ takes the form of~\eqref{definition elements F}.

\section{Stochastic approximation algorithms for solving $\VI(F,\HH)$} \label{section results} \label{section stochastic approximation algorithms}
In this section, we introduce the SA algorithms along with their convergence analysis. Then we establish results relating the accuracy of the algorithms to the size of the estimation error and the sample size in each iteration. All introduced schemes approximate $F$ with the estimator given in~\eqref{estimator CVaR}. Given $N$ independently and identically distributed samples $\big\{(\widehat{C_i(h,u)} )_j\big\}_{j=1}^N$ of the random variable $C_i(h,u)$, let 
\begin{align*}\label{eq:Fhat}
    \Fhat^N_i(h) := \inf_{t \in \mathbb{R}}\Big\{ t + \frac{1}{N\alpha}\sum_{j=1}^N\big[(\widehat{C_i(h,u)})_j - t\big]_+ \Big\}
\end{align*}
stand for the estimator of $F_i(h)$. We occasionally use $\CVaRhat_\alpha^N \big[C_i(h,u)\big]$ to denote $\Fhat^N_i(h)$.
Analogously, the estimator of $F(h)$ formed using the element-wise concatenation of $\Fhat^N_i(h)$, $i \in \until{n}$, is denoted by $\Fhat^N(h)$ or $\CVaRhat^N_\alpha \big[C(h,u)\big]$. We assume that the $N$ samples of each cost function are a result of the same set of $N$ events, that is, the distribution of $\Fhat^N(h)$ depends on $\Pb^N$.
 All algorithms introduced in this section depend on a sequence of step-sizes $\{\gamma^k\}_k^\infty$, where $\gamma^k > 0$ for all $k \in \naturalnumbers$. Common assumptions for this sequence are
 \begin{equation} \label{stepsize}
    \begin{split}
       \sum_{k = 0}^\infty \gamma^k = \infty, &\qquad \sum_{k = 0}^\infty (\gamma^{k})^2 < \infty.
    \end{split}
\end{equation}
For all upcoming algorithms, we will assume that the sequence $\{\gamma^k\}_k^\infty$ satisfies these assumptions.
 
 For a given sequence $\{N_k\}_{k=0}^\infty \subset \naturalnumbers$, and an initial vector $h^0 \in \HH$, the first algorithm under consideration, which we will refer to as the \textit{projected algorithm} is given by
\begin{equation}\label{eq:projected-original-form}
    h^{k+1} = \Pi_{\HH}\big( h^k - \gamma^k\Fhat^{N_k}(h^{k})\big),
\end{equation}
where $\Pi_{\HH}$ is the projection operator (cf. Section \ref{section introduciton}) and $h^k$ is the $k$-th iterate of $h$ produced by the algorithm. The above algorithm is inspired by the SA schemes for solving a stochastic VI problem, see~\cite{UVS:13} for details on other SA schemes. The key difference from the setup in~\cite{UVS:13} is the fact that there the map $F$ is the expected value of a random variable for which an unbiased estimator $\widehat{F}$ is available. In our case, the estimator is biased posing limitations on the sample requirements for convergence of the algorithms. 

For analysis, it is convenient to write the projected algorithm~\eqref{eq:projected-original-form} equivalently as
\begin{equation} \label{algorithm with error term}
    h^{k+1} = \Pi_{\HH}\Big( h^k - \gamma^k\big(F(h^{k}) + \widehat{\beta}^{N_k} \big) \Big),
\end{equation}
where $\widehat{\beta}^{N_k}$ is used to denote the error introduced by estimation, and is given by
\begin{equation}\label{error term in algorithm}
    \widehat{\beta}^{N_k} := \widehat{F}^{N_k}(h^k) - F(h^k).
\end{equation}

The following result on the convergence of \eqref{algorithm with error term} is then a direct consequence of \cite[Theorem 5.2.1]{HJK-GGY:97}.
\begin{proposition} \longthmtitle{Convergence of~\eqref{algorithm with error term} to a solution of $\VI(F,\HH)$} \label{original theorem}
Let $F$, as defined in \eqref{definition elements F}, be a strictly monotone, continuous function, and let $\HH$ be a compact convex set. 
For the algorithm~\eqref{algorithm with error term}, assume that the sequence of step-sizes $\{\gamma^k\}$ satisfies~\eqref{stepsize} and the sequence $\{N_k\}$ is such that $\{\widehat{\beta}^{N_k}\}$ is bounded with probability one and $N_k \to \infty$ as $k \to \infty$. Then, the iterates $\{h^k\}$ generated by~\eqref{algorithm with error term} satisfy
\begin{equation*}
    \lim_{k \rightarrow \infty}\norm{h^k - h^*} = 0,
\end{equation*}
 for $h^* \in \SOL(F,\HH)$, with probability one.
\end{proposition}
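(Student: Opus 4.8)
The plan is to invoke the ODE (mean-field) method for constrained stochastic approximation, recognizing \eqref{algorithm with error term} as a projected recursion driven by $-F(h^k)$ plus the estimation error $\widehat\beta^{N_k}$, and then to verify the hypotheses of \cite[Theorem 5.2.1]{HJK-GGY:97} by (i) decomposing the error into averaging-out noise plus a vanishing bias, and (ii) exhibiting a globally asymptotically stable rest point of the limiting dynamics. Letting $\FF_k$ denote the $\sigma$-algebra generated by the iterates up to step $k$ (so that $h^k$ is $\FF_k$-measurable while the samples drawn at step $k$ are fresh), I would split
\begin{equation*}
    \widehat\beta^{N_k} = \underbrace{\big(\Fhat^{N_k}(h^k) - \mathbb{E}[\Fhat^{N_k}(h^k)\mid \FF_k]\big)}_{=:\, \delta M_k} + \underbrace{\big(\mathbb{E}[\Fhat^{N_k}(h^k)\mid \FF_k] - F(h^k)\big)}_{=:\, b_k},
\end{equation*}
where $\{\delta M_k\}$ is a martingale-difference sequence and $b_k$ is the (conditional) bias of the empirical CVaR at $h^k$.

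With this decomposition, the mean-field dynamics associated to \eqref{algorithm with error term} is the projected dynamical system
\begin{equation*}
    \dot h = \Pi_{T_\HH(h)}\big(-F(h)\big),
\end{equation*}
whose vector field projects $-F(h)$ onto the tangent cone $T_\HH(h)$; since $\HH$ is compact and convex and $F$ is continuous, this flow is well-posed and forward invariant on $\HH$, and its rest points are exactly the points $h^*$ with $-F(h^*)$ in the polar (normal) cone, i.e.\ exactly $\SOL(F,\HH)$. To show the unique solution $h^*$ (uniqueness following from strict monotonicity) is globally asymptotically stable, I would take the Lyapunov function $V(h)=\tfrac12\norm{h-h^*}^2$. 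Writing $w:=\Pi_{T_\HH(h)}(-F(h))$, the projection property places $-F(h)-w$ in the polar cone of $T_\HH(h)$, so $(h^*-h)^\top(-F(h)-w)\le 0$, which rearranges to $(h-h^*)^\top w \le -(h-h^*)^\top F(h)$; hence $\dot V \le -(h-h^*)^\top F(h)$. Strict monotonicity gives $(h-h^*)^\top(F(h)-F(h^*))>0$ for $h\ne h^*$, and since $h^*\in\SOL(F,\HH)$ gives $(h-h^*)^\top F(h^*)\ge 0$, we obtain $(h-h^*)^\top F(h)>0$; thus $\dot V<0$ on $\HH\setminus\{h^*\}$, establishing global asymptotic stability.

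It then remains to check the remaining hypotheses of \cite[Theorem 5.2.1]{HJK-GGY:97}: the step-size conditions hold by \eqref{stepsize}; the iterates are automatically bounded because $\Pi_\HH$ maps into the compact set $\HH$; and the almost-sure boundedness of $\{\widehat\beta^{N_k}\}$ furnishes the bounded second moments of $\{\delta M_k\}$ that, together with $\sum_k (\gamma^k)^2<\infty$, make the martingale contribution $\sum_k \gamma^k \delta M_k$ convergent and hence asymptotically negligible in the ODE scaling. The main obstacle, and the reason the expectation-based SVI analysis of \cite{UVS:13} does not transfer directly, is the bias term $b_k$: the empirical CVaR estimator $\Fhat^{N}$ is biased, so $b_k$ is in general nonzero and, unlike $\delta M_k$, does not average out along the recursion. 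This is exactly where the assumption $N_k\to\infty$ is needed: by consistency of the empirical CVaR \cite[Chapter 6]{AS-DD-AR:14}, combined with the assumed almost-sure boundedness (which supplies the uniform integrability needed to pass from almost-sure to mean convergence) and compactness of $\HH$, the bias $b_k\to 0$ as $k\to\infty$. With the bias vanishing, the perturbed projected ODE reduces to $\dot h=\Pi_{T_\HH(h)}(-F(h))$, and \cite[Theorem 5.2.1]{HJK-GGY:97} yields convergence of $\{h^k\}$ to its globally asymptotically stable rest point $h^*$ with probability one, as claimed.
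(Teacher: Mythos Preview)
Your proposal is correct and takes essentially the same approach as the paper: both invoke \cite[Theorem 5.2.1]{HJK-GGY:97} via the ODE method, reducing convergence to (a) approximation of the iterates by a trajectory of the projected dynamics and (b) global asymptotic stability of $h^*$ for that flow. The paper itself gives only this two-step outline and declares the result a ``direct consequence'' of the cited theorem; your write-up simply fills in the details (the martingale-plus-bias decomposition of $\widehat\beta^{N_k}$, the Lyapunov computation with $V(h)=\tfrac12\norm{h-h^*}^2$, and the role of $N_k\to\infty$ in killing the bias) that the paper leaves implicit.
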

The proof proceeds in two steps: (a) showing that the iterative scheme asymptotically approaches a trajectory of a continuous-time dynamical system; and (b) establishing asymptotic stability of the system. We will use this reasoning for analyzing the other algorithms presented in this work.

Note that since the CVaR estimator is biased, Proposition~\ref{original theorem} requires the number of samples to grow unboundedly as the algorithm progresses. In order to address this tractability issue, we propose two algorithms which achieve convergence to an $\epsilon$ neighborhood of $\SOL(F,\HH)$ using a finite number of samples in each iteration. Unlike~\eqref{algorithm with error term}, both algorithms allow the iterates to take values outside the set $\HH$. This limitation is a result of our analysis approach. With projections and a biased estimator, the analysis of~\eqref{algorithm with error term} with finite samples at each iteration would involve studying the input-to-state stability of a continuous-time projected dynamical system~\cite{AN-DZ:96}, the theory for which is not yet available in the literature. 

 As in Proposition~\ref{original theorem}, we will impose continuity and monotonicity assumptions on $F$ in the upcoming results. Sufficient conditions for the Lipschitz continuity of $F$ are given in \cite[Lemma IV.8]{AC:19-cdc}.
 We provide the following general result on the continuity and monotonicity properties of $F$. 
\begin{lemma}\longthmtitle{Sufficient conditions for monotonicity and continuity of $F$}
    The following hold:
    \begin{itemize}
        \item Assume that for all $i \in [n]$, there exist functions ${f_i:\real^n \rightarrow \real}$ and $g_i: \real^m \rightarrow \real$ such that ${C_i(h,u) \equiv f_i(h) + g_i(u)}$.
        Let $f(h):= (f_1(h), \dots, f_n(h))$. Then, $F$ is monotone (resp. strictly or strongly monotone) if $f$ is monotone (resp. strictly or strongly monotone).
        \item If for any $\epsilon > 0$ there exist a $\delta > 0$ such that ${\norm{h - h'} \le \delta}$ implies $\norm{ C_i(h,u) - C_i(h',u)} \le \epsilon$ for all $i \in \until{n}$ and all $u$, then $F$ is continuous.
    \end{itemize}
\end{lemma}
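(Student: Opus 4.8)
The plan is to reduce both claims to two elementary coherence properties of the CVaR operator, each immediate from its infimum representation: \emph{translation invariance}, $\CVaR_\alpha[Z + c] = \CVaR_\alpha[Z] + c$ for any constant $c$, and \emph{monotonicity}, $Z \le Z'$ almost surely implies $\CVaR_\alpha[Z] \le \CVaR_\alpha[Z']$. The first follows from the change of variable $t \mapsto t - c$ in the defining infimum. The second holds because $[\,\cdot\,]_+$ is nondecreasing, so $[Z-t]_+ \le [Z'-t]_+$ pointwise; taking expectations preserves this, and then taking the infimum over $t$ on both sides preserves it as well. I would state and verify these two facts first, since every subsequent step rests on them.

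For the first bullet, I would exploit that under the separable structure $C_i(h,u) = f_i(h) + g_i(u)$ the term $f_i(h)$ is deterministic for fixed $h$. Translation invariance then gives $F_i(h) = \CVaR_\alpha[f_i(h) + g_i(u)] = f_i(h) + \CVaR_\alpha[g_i(u)]$, so that $F(h) = f(h) + b$, where $b := (\CVaR_\alpha[g_1(u)], \dots, \CVaR_\alpha[g_n(u)])$ is a constant vector. The constant cancels in every difference, $(F(x)-F(y))^\top(x-y) = (f(x)-f(y))^\top(x-y)$, so $F$ inherits monotonicity, strict monotonicity, and strong monotonicity (with the same constant) directly from $f$.

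For the second bullet, I would fix $\epsilon > 0$ and take the corresponding $\delta$ from the hypothesis. For $\norm{h - h'} \le \delta$ the uniform bound $|C_i(h,u) - C_i(h',u)| \le \epsilon$ holds for every $u$, hence $C_i(h',u) - \epsilon \le C_i(h,u) \le C_i(h',u) + \epsilon$ almost surely. Applying monotonicity and then translation invariance of CVaR to each side sandwiches $F_i(h)$ between $F_i(h') - \epsilon$ and $F_i(h') + \epsilon$, so $|F_i(h) - F_i(h')| \le \epsilon$ for all $i$, which is exactly (uniform) continuity of $F$.

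I do not anticipate a substantial obstacle: the argument is routine once the two coherence properties are in hand. The only points requiring minor care are deriving those properties from the infimum definition rather than invoking them as a black box, and noting that in the second part the pointwise-in-$u$ bound yields the almost-sure ordering required for CVaR monotonicity; the fact that the bound is uniform over $u$ makes this last step clean.
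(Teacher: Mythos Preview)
Your proposal is correct and follows essentially the same approach as the paper: for the first bullet both arguments invoke the coherence of $\CVaR$ (you isolate translation invariance explicitly, the paper cites coherence as a package) to obtain $F(h)-F(h')=f(h)-f(h')$, from which the monotonicity claims are immediate. For the second bullet the paper does not spell out a proof but defers to an analogous argument in a cited lemma, whereas you give the explicit sandwich via monotonicity plus translation invariance; the underlying idea is the same, and your version is self-contained.
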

The proof of the first part of this statement makes use of the coherence of the CVaR as a risk measure \cite[Page 261]{AS-DD-AR:14}. This, together with the made assumptions, implies
\begin{equation*}
    \CVaR_\alpha\big[C_i(h,u)\big] = f_i(h) + \CVaR_\alpha\big[g_i(u)\big].
\end{equation*}
As a consequence, we have $F(h) - F(h^*) = f(h) - f(h^*)$, and the result follows.
The part of this statement pertaining to continuity can be derived by arguments similar to those of the proof of \cite[Lemma IV.8]{AC:19-cdc}. Note that the given conditions for continuity may be difficult to check in practice. However, they hold when $\HH$ is compact, $u$ has a compact support, and for any fixed $u$, the functions $C_i$ are continuous with respect to $h$.
We now proceed to introduce two more algorithms. 

\subsection{Penalty-driven algorithm} 
We define the
\emph{penalty-driven algorithm} as
\begin{equation} \label{penalty algorithm}
    h^{k+1} = h^k -\gamma^k\Big(F(h^k) + c\big(h^k - \Pi_{\HH}(h^k)\big) + \widehat{\beta}^{N_k}\Big).
\end{equation}
Here $c > 0$ is a constant and the error sequence $\{\widehat{\beta}^{N_k}\}$ is as defined in~\eqref{error term in algorithm}. This algorithm allows the iterates $\{h^k\}$ to take values outside of $\HH$. However, the term $h^k - \Pi_{\HH}(h^k)$ controls this drift; the higher the value of the design parameter $c$, the closer the limit of $\{h^k\}$ is to $\HH$. In this sense, the constant $c$ determines the \emph{penalty} for moving out of the set.
Now we state the convergence properties of~\eqref{penalty algorithm}.
\begin{proposition} \label{penalty theorem} \longthmtitle{Convergence of the penalty-driven algorithm~\eqref{penalty algorithm}}
Let $F$, as defined in \eqref{definition elements F}, be a strictly monotone, continuous function, and let $\HH$ be a compact convex set. For the algorithm \eqref{penalty algorithm} and $h^* \in \SOL(F,\HH)$, let $c = d\norm{F(h^*)}$ for some $d > 0$. Assume that the sequence of step-sizes $\{\gamma^k\}$ satisfies \eqref{stepsize} and that the sequence $\{N^k\}$ is such that $\{\widehat{\beta}^{N_k}\}$ and $\{h^k\}$ are bounded with probability one. 
Then, for any $\epsilon > 0$, there exist $d_{\epsilon} > 0$ and $N_{\epsilon} \in \mathbb{N}$ such that $d \geq d_{\epsilon}$ and $N_k \geq N_{\epsilon}$ for all $k$ imply, with probability one, 
\begin{equation} \label{penalty epsilon statement}
    \lim_{k \rightarrow \infty} \norm{h^k - h^*} \leq \epsilon.
\end{equation}
\end{proposition}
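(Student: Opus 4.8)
The plan is to carry out the two-step ODE program announced after Proposition~\ref{original theorem}: approximate the asymptotic behaviour of~\eqref{penalty algorithm} by a continuous-time system, and then study its stability, here only up to a neighbourhood of $h^*$ whose radius is dictated jointly by the penalty parameter $d$ and by the sample size. Writing $G(h) := F(h) + c\big(h - \Pi_{\HH}(h)\big)$, the drift of~\eqref{penalty algorithm} is $G(h^k) + \widehat{\beta}^{N_k}$, and I would split the error into its conditional mean $\beta^k := \mathbb{E}\big[\widehat{\beta}^{N_k} \mid h^k\big]$ and a martingale-difference remainder. The step-size conditions~\eqref{stepsize} average out the remainder, so by \cite[Theorem 5.2.1]{HJK-GGY:97} (exactly as for Proposition~\ref{original theorem}) the iterates asymptotically track the perturbed ODE $\dot{h} = -\big(G(h) + \beta(\tau)\big)$, where $\beta(\tau)$ is a bounded disturbance. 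Since the iterates lie, with probability one, in a compact set $K$, the disturbance obeys $\norm{\beta(\tau)} \le \bar b(N_\epsilon)$, where $\bar b(N) := \sup_{M \ge N,\, h \in K}\norm{\mathbb{E}[\Fhat^{M}(h)] - F(h)}$ is a uniform bound on the finite-sample CVaR-estimation bias; the deviation bounds of \cite{YW-FG:10,RPR-NDS:10} ensure $\bar b(N) \to 0$ as $N \to \infty$. I would also note that $h \mapsto h - \Pi_{\HH}(h)$ is the gradient of the convex map $\tfrac12\norm{\,\cdot\, - \Pi_{\HH}(\cdot)}^2$ and hence monotone, so $G$ is strictly monotone and the nominal ODE $\dot h = -G(h)$ is well behaved.

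The heart of the argument is a Lyapunov estimate for $V(h) = \tfrac12\norm{h - h^*}^2$ along the perturbed ODE. Setting $r := \norm{h - \Pi_{\HH}(h)}$, three elementary facts drive the bound: the projection inequality $(h - \Pi_{\HH}(h))^\top(\Pi_{\HH}(h) - h^*) \ge 0$, which yields $(h - h^*)^\top(h - \Pi_{\HH}(h)) \ge r^2$; the variational inequality at $h^*$, namely $(\Pi_{\HH}(h) - h^*)^\top F(h^*) \ge 0$ since $\Pi_{\HH}(h) \in \HH$; and Cauchy--Schwarz, $(h - \Pi_{\HH}(h))^\top F(h^*) \ge -r\norm{F(h^*)}$. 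Combining these with monotonicity of $F$ and the choice $c = d\norm{F(h^*)}$ gives
\begin{equation*}
    \dot V \le -\,(h - h^*)^\top\big(F(h) - F(h^*)\big) + \norm{F(h^*)}\big(r - d r^2\big) + \norm{h - h^*}\,\bar b(N_\epsilon).
\end{equation*}
Maximizing the penalty contribution over $r \ge 0$ gives $\norm{F(h^*)}(r - dr^2) \le \norm{F(h^*)}/(4d)$, so this term is rendered arbitrarily small by enlarging $d$, independently of the scale of $F$.

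To close the argument I would exploit strict (rather than strong) monotonicity through a modulus on $K$: for each $\delta > 0$ the quantity $\phi(\delta) := \min\{(h - h^*)^\top(F(h) - F(h^*)) : h \in K,\ \norm{h - h^*} \ge \delta\}$ is attained and strictly positive, by continuity of $F$, compactness of the admissible region, and strict monotonicity. Hence, on $\{h \in K : \norm{h - h^*} \ge \epsilon\}$,
\begin{equation*}
    \dot V \le -\phi(\epsilon) + \frac{\norm{F(h^*)}}{4d} + D\, \bar b(N_\epsilon), \qquad D := \sup_{h,h' \in K}\norm{h - h'}.
\end{equation*}
Choosing $d_\epsilon$ and $N_\epsilon$ so that $\norm{F(h^*)}/(4d) + D\,\bar b(N_\epsilon) \le \tfrac12\phi(\epsilon)$ whenever $d \ge d_\epsilon$ and $N_k \ge N_\epsilon$ makes $\dot V \le -\tfrac12\phi(\epsilon) < 0$ outside the $\epsilon$-ball, so every trajectory of the perturbed ODE enters and remains in $\cl(\NN_{\epsilon}(h^*))$; transferring this back to the discrete iterates via \cite[Theorem 5.2.1]{HJK-GGY:97} yields~\eqref{penalty epsilon statement} with probability one. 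I expect the main obstacle to be precisely the interplay between the persistent (non-vanishing) bias of the finite-sample estimator and the absence of strong monotonicity: without strong monotonicity there is no global modulus, so one must first invoke the almost-sure boundedness of the iterates to restrict to the compact set $K$ and extract the positive modulus $\phi(\epsilon)$, and one must verify that the ODE method of \cite{HJK-GGY:97} indeed delivers neighbourhood---rather than exact---convergence in the presence of a bounded, persistent bias term.
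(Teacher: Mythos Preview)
Your proposal is correct and follows essentially the same route as the paper: ODE approximation of~\eqref{penalty algorithm}, the Lyapunov function $V = \norm{h-h^*}^2$ (up to the harmless factor $\tfrac12$), and the same three ingredients---the projection inequality, the variational inequality at $h^*$, and Cauchy--Schwarz---combined with strict monotonicity to show $\dot V$ is strictly negative outside $\cl(\NN_\epsilon(h^*))$ once $d$ and the sample size are large enough. The paper also maximises the penalty leakage over $r=\norm{h-\Pi_\HH(h)}$, obtaining the same $\norm{F(h^*)}/(4d)$-type term, and then absorbs the bias by taking $N_k$ large.

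The only presentational difference worth noting is that you isolate the monotonicity contribution explicitly as the modulus $\phi(\epsilon)=\min_{h\in K,\,\norm{h-h^*}\ge\epsilon}(h-h^*)^\top(F(h)-F(h^*))$, whereas the paper first passes from $F(\bar h)$ to $F(h^*)$ via strict monotonicity and then extracts an unspecified gap $\delta_0>0$ on the compact set $\Delta_\epsilon$ from the resulting strict inequality; your formulation makes transparent that $\delta_0$ can be taken independent of $d$. A minor bookkeeping point: the paper invokes \cite[Theorem~5.3.1]{HJK-DSC:78} rather than \cite[Theorem~5.2.1]{HJK-GGY:97} for the ODE limit of the \emph{unprojected} scheme~\eqref{penalty algorithm}, and it appeals to consistency of the CVaR estimator (rather than the concentration bounds you cite) for $\bar b(N)\to 0$; neither affects the substance of your argument.
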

\begin{proof}
For convenience, we split the error as ${\widehat{\beta}^{N_k} = b^{N_k} + \widehat{\xi}^{N_k}}$, where ${b^{N_k} = \mathbb{E}[ \widehat{\beta}^{N_k}]}$. By definition, $\mathbb{E}[\widehat{\xi}^{N_k}] = 0$ and by the boundedness assumption, there exists a constant $B > 0$ such that $\norm{b^{N_k}} \le B$ for all $k$. As mentioned before, the proof proceeds in two steps. First, the sequence $\{h^k\}$ is shown to converge to a trajectory of the following continuous-time system
\begin{equation} \label{ODE penalty} 
          \dot{\bar{h}} (t) = -F\big(\bar{h}(t)\big) - c\Big(\bar{h}(t) - \Pi_{\HH}\big(\bar{h}(t)\big)\Big) - \bar{b}(t). 
 \end{equation}
Here, $\bar{b}(\cdot)$ is a uniformly bounded map satisfying $\norm{\bar{b}(t)} \leq B$ for all $t$.
The proof of the above fact is analogous to that of \cite[Theorem 5.3.1]{HJK-DSC:78} and we avoid repeating these arguments for space reasons. To be more precise, the sequence $\{h^k\}$ converges to a trajectory of~\eqref{ODE penalty} if there exists a map $t \mapsto \bar{h}(t)$ with $\bar{h}(0) \in \real^n$ satisfying~\eqref{ODE penalty} for all $t$ and 
\begin{equation} \label{convergence to trajectory}
       \textstyle \lim_{k \rightarrow \infty} \sup_{j \geq k}\norm{h^j - \bar{h}(\sum_{r=k}^{j-1} \gamma^r)} = 0.
\end{equation}
That is, the discrete-time trajectory formed by the linear interpolation of the iterates $\{h^k\}$ approaches a continuous-time trajectory $t \mapsto \bar{h}(t)$. Convergence of the sequence $\{h^k\}$ can then be analyzed by studying the asymptotic stability of~\eqref{ODE penalty}. 
To this end, define the Lyapunov function
    \begin{equation} \label{lyapunov function penalty}
        V\big(\bar{h}\big) = \norm{\bar{h} - h^*}^2,
    \end{equation}
 where $h^*$ is the unique solution to $\VI(F,\HH)$, that follows from strict monotonicity. For readability, we introduce the notation ${\tilde{h} = \Pi_{\HH}(\bar{h})}$. We first analyze the evolution of $V$ along~\eqref{ODE penalty} when $\bar{b} \equiv 0$. For notational convenience, define the right-hand side of~\eqref{ODE penalty} in such a case by the map ${\map{X_{b\equiv0}}{\real^n}{\real^n}}$. The Lie derivative of $V$ along $X_{b\equiv0}$ is
   \begin{equation} \label{vdot penalty}
        \nabla V(\bar{h})^\top X_{b\equiv0}(\bar{h}) = -2(\bar{h} - h^*)^\top\big(F(\bar{h}) + c(\bar{h} - \tilde{h})\big).
    \end{equation}
   Our next step is to show that the Lie derivative is upper bounded by a negative quantity whenever the trajectory is at least $\epsilon > 0$ away from $h^*$. To this end, let ${\Delta_\epsilon :=\setdef{\bar{h}}{\bar{h} \notin \NN_{\epsilon}(h^*), \enskip \norm{\bar{h}} \leq B_h}}$, where $\NN_{\epsilon}(h^*)$ is an (open) $\epsilon$ neighborhood of $h^*$ and $B_h$ is a bound on the trajectory $\bar{h} (\cdot)$ where the iterates converge to. Such a bound exists due to \cite[Theorem 5.3.1]{HJK-DSC:78}. Note that $\Delta_\epsilon$ is a compact set.
   We will show that there exists $\delta_1> 0$ such that ${\nabla V(\bar{h})^\top X_{b\equiv0}(\bar{h})  < - \delta_1}$ for all $\bar{h} \in \Delta_\epsilon$. Note that we have $(\bar{h} - h^*)^\top F(\bar{h}) > (\bar{h} - h^*)^\top F(h^*)$ due to strict monotonicity of $F$. Using this fact in~\eqref{vdot penalty}, gives for all $\bar{h} \in \Delta_\epsilon$,
   \begin{align*}
        \nabla V(\bar{h})&^\top X_{b\equiv0}(\bar{h}) < -2(\bar{h} - h^*)^\top\big(F(h^*) + c(\bar{h} - \tilde{h})\big) \\
        &= -2(\bar{h} - \tilde{h})^\top F(h^*) -2(\tilde{h} - h^*)^\top F(h^*) \nonumber \\
        &\qquad -2c(\bar{h} - \tilde{h})^\top (\bar{h} - \tilde{h}) -2c(\tilde{h} - h^*)^\top(\bar{h} - \tilde{h}),
    \end{align*}
   where we recall that $\tilde{h} = \Pi_{\HH}(\bar{h})$. Since $h^* \in \SOL(F,\HH)$, we have $(\tilde{h} - h^*)^\top F(h^*) \geq 0$. Further, since $\HH$ is convex, the projection property implies that $(\tilde{h} - h^*)^\top (\bar{h} - \tilde{h}) \ge 0$. Using these two facts and applying the Cauchy-Schwartz inequality in the above derived inequality, we obtain, for all $\bar{h} \in \Delta_\epsilon$,
    \begin{equation*}
       \nabla V(\bar{h})^\top \! X_{b\equiv0}(\bar{h}) \! < \! 2\norm{\bar{h} \! - \! \tilde{h}}\big(\norm{F(h^*)} -d\norm{F(h^*)}\norm{\bar{h} - \tilde{h}}\big).
   \end{equation*}
   Since $\Delta_\epsilon$ is compact, there exists $\delta_0 > 0$ such that
   \begin{align*}
       \nabla V(\bar{h})^\top & X_{b\equiv0}(\bar{h}) \leq  \\  
       &2\norm{\bar{h} - \tilde{h}} \norm{F(h^*)} -2d\norm{F(h^*)}\norm{\bar{h} - \tilde{h}}^2 - \delta_0 
   \end{align*}
    holds for all $\bar{h} \in \Delta_\epsilon$. The right-hand side as a function of $\norm{\bar{h} - \tilde{h}}$ attains a maximum at ${\norm{\bar{h}- \tilde{h}} = \frac{1}{2d}}$. Thus, we have 
    
\begin{equation*}
    \nabla V(\bar{h})^\top X_{b\equiv0}(\bar{h}) \leq \frac{1}{2d}\norm{F(h^*)} -\delta_0.
\end{equation*}
    It follows that if we set
    \begin{equation*}
        d = \frac{\norm{F(h^*)}}{2 \delta_0}(1 - \frac{\delta_1}{\delta_0})^{-1}
    \end{equation*}
    for some $\delta_1 < \delta_0$, then $\nabla V(\bar{h})^\top X_{b\equiv0}(\bar{h}) \leq -\delta_1$ for all ${\bar{h} \in \Delta_\epsilon}$. At this point, we drop the earlier made restriction $b \equiv 0$. Denote the right-hand side of~\eqref{ODE penalty} by the map $X$. From the above reasoning, the Lie derivative of $V$ along~\eqref{ODE penalty} satisfies 
    \begin{align}\label{eq:lie-with-b}
        \nabla V(\bar{h})^\top X(\bar{h}) \le - \delta_1 + 2 (\bar{h} - h^*)^\top b
    \end{align}
    for all $\bar{h} \in \Delta_\epsilon$.
Note that as stated before, the bound on the norm of the map $b$ is the same as the bound on the iterates $\{b^{N_k}\}$. Since $\bar{h}(\cdot)$ is bounded and for any $k$, $\norm{b^{N_k}} \to 0$ as $N_k \to \infty$ (as the empirical estimate of the CVaR is consistent), we conclude that selecting large enough $N_k$ for all $k$, implies $\norm{2(\bar{h} - h^*)^\top b} < \delta_1$. Plugging this inequality in~\eqref{eq:lie-with-b} yields, for all $\bar{h} \in \Delta_\epsilon$,
\begin{equation*}
    \nabla V(\bar{h})^\top X(\bar{h}) \le - \delta
\end{equation*}
 for some $\delta \in (0,\delta_1)$. This implies that the trajectory $\bar{h}(\cdot)$ of~\eqref{ODE penalty} reaches the closure of $\NN_\epsilon(h^*)$ in finite time. 
Additionally, once the trajectory reaches $\cl (\NN_{\epsilon}(h^*))$, it stays there. 
Thus,~\eqref{penalty epsilon statement} holds with probability one, concluding the proof.
\end{proof}
\begin{remark}\longthmtitle{Practical considerations of~\eqref{penalty algorithm}}\label{equivalent to bounded problem}
In Proposition \ref{penalty theorem}, for small values of $\epsilon$, one would require a large value of $d$ to ensure convergence. This may result in large oscillations of $h^k$ when the term $\gamma^k d$ remains large. Such behaviour can be prevented by either starting with small values of $\gamma^k$ or increasing $d$ along iterations, until it reaches a predetermined size. The result is then still valid but the convergence can only be guaranteed once $d$ reaches the required size. \oprocend 
\end{remark}
\begin{remark}\longthmtitle{Generalizations of Proposition \ref{penalty theorem}} \label{remark on projection}
For all our convergence results, we require $\{h^k\}$ to be bounded. This is however not very restrictive. The results remain valid if the sequence $\{h^k\}$ is projected onto a hyper-rectangle containing $\HH$ (cf. \cite[Page 40]{HJK-DSC:78}). 
\oprocend
\end{remark}
\subsection{Multiplier-driven algorithm}
Algorithms \eqref{algorithm with error term} and \eqref{penalty algorithm} involve projection onto the set $\HH$ at each iteration. This can be computationally burdensome.
Our next algorithm overcomes this limitation. Inspired by the Lagrangian method, we assume $\HH$ to be of the form \eqref{explicit contrained set 2} and introduce a multiplier variable $(\lambda, \mu) \in \realnonnegative^s \times \real^t$ that enforces constraint satisfaction as the algorithm progresses. In order to simplify the coming equations, we write
\begin{equation*}
H(h,\lambda,\mu) := F(h) + Dq(h)^\top \lambda + Dl(h)^\top \mu, 
\end{equation*} 

where $Dq(h)$ and $Dl(h)$ are the Jacobians of $q$ and $l$ at $h$, respectively. The \emph{multiplier-driven algorithm} is given by 
\begin{equation} \label{Lagrangian algorithm}
    \begin{split}
        h^{k + 1} &= h^k - \gamma^k \big(H(h^k,\lambda^k,\mu^k) + \widehat{\beta}^{N_k}\big),  \\
        \lambda^{k+1} &= \big[\lambda^{k} + \gamma^k q(h^k) \big]_+, 
        \\
        \mu^{k+1} & = \mu^k + \gamma^k l(h^k). 
    \end{split} 
\end{equation} 
Recall that $\widehat{\beta}^{N_k}$ is the error due to empirical estimation of $F$.
 The next result states the convergence properties of~\eqref{Lagrangian algorithm} to a KKT point of the VI (see Section~\ref{section preliminaries} for definitions).
 \begin{proposition} \longthmtitle{Convergence of the multiplier-driven algorithm~\eqref{Lagrangian algorithm}} \label{lagrange theorem}
    Let $F$, as defined in \eqref{definition elements F}, be a strictly monotone, continuous function, and let $\HH$ be a compact set given by \eqref{explicit contrained set 2}, where the functions $q^j$, $j \in \until{s}$, are affine. Assume that the LICQ holds for ${h^* \in \SOL(F,\HH)}$, and let $(h^*,\lambda^*, \mu^*)$ be an associated KKT point. 
    For algorithm \eqref{Lagrangian algorithm}, assume that the sequence of step-sizes $\{\gamma^k\}$ satisfies \eqref{stepsize} and let $\{N_k\}$ be such that $\{\widehat{\beta}^{N_k}\}$, $\{h^k\}$, $\{\lambda^k\}$ and $\{\mu^k\}$ are bounded with probability~one.
    Then, for any $\epsilon > 0$, there exists an $N_\epsilon \in \mathbb{N}$ such that if $N_k \geq N_\epsilon$ for all $k$, then, with probability one,
    \begin{equation} \label{lagrangian epsilon statement}
        \lim_{k \rightarrow \infty} \norm{h^k - h^*} \leq \epsilon. 
    \end{equation} 
 \end{proposition}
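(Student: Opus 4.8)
The plan is to reuse, essentially verbatim, the two-step template from the proof of Proposition~\ref{penalty theorem}, now applied to the joint state $(h^k,\lambda^k,\mu^k)$: first approximate the iterates by a trajectory of a continuous-time system (again invoking \cite[Theorem 5.3.1]{HJK-DSC:78}, this time on the augmented state), then certify convergence of the primal block to a neighborhood of $h^*$ by a Lyapunov argument. Writing the estimation error as $\widehat{\beta}^{N_k} = b^{N_k} + \widehat{\xi}^{N_k}$ with $b^{N_k} = \mathbb{E}[\widehat{\beta}^{N_k}]$ and $\norm{b^{N_k}}\le B$, the relevant limiting system is
\begin{equation*}
\begin{split}
\dot{\bar{h}} &= -F(\bar{h}) - Dq(\bar{h})^\top \bar{\lambda} - Dl(\bar{h})^\top \bar{\mu} - \bar{b}, \\
\dot{\bar{\lambda}} &= [q(\bar{h})]^+_{\bar{\lambda}}, \qquad \dot{\bar{\mu}} = l(\bar{h}),
\end{split}
\end{equation*}
where $\bar{b}(\cdot)$ is uniformly bounded by $B$ and $[\cdot]^+_{\bar{\lambda}}$ is the boundary operator of Section~\ref{section preliminaries}, which encodes the projection keeping the multiplier nonnegative.

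As certificate I would take the primal--dual function
\begin{equation*}
V(\bar{h},\bar{\lambda},\bar{\mu}) = \norm{\bar{h}-h^*}^2 + \norm{\bar{\lambda}-\lambda^*}^2 + \norm{\bar{\mu}-\mu^*}^2 .
\end{equation*}
The heart of the argument is the Lie derivative of $V$, first with $\bar{b}\equiv 0$. Since $q$ and $l$ are affine, $Dq$ and $Dl$ are constant and $q(\bar h)-q(h^*)=Dq(\bar h-h^*)$, $l(\bar h)=Dl(\bar h-h^*)$; adding and subtracting the starred quantities and using the KKT stationarity $F(h^*)+Dq^\top\lambda^*+Dl^\top\mu^*=0$, the cross terms $\pm 2(\bar h-h^*)^\top Dq^\top(\bar\lambda-\lambda^*)$ and $\pm 2(\bar h-h^*)^\top Dl^\top(\bar\mu-\mu^*)$ arising from the three blocks cancel exactly. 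The multiplier block additionally needs the elementary inequality $(\bar\lambda-\lambda^*)^\top[q(\bar h)]^+_{\bar\lambda}\le(\bar\lambda-\lambda^*)^\top q(\bar h)$, valid because $\lambda^*\ge 0$ and verified componentwise from the definition of the boundary operator; complementary slackness $\lambda^{*\top}q(h^*)=0$ together with $\bar\lambda\ge 0$, $q(h^*)\le 0$ then leaves only the harmless term $2\bar\lambda^\top q(h^*)\le 0$. The outcome is
\begin{equation*}
\nabla V^\top X_{b\equiv0} \le -2(\bar h-h^*)^\top\big(F(\bar h)-F(h^*)\big) \le 0,
\end{equation*}
strict whenever $\bar h\ne h^*$ by strict monotonicity.

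From here the endgame mirrors Proposition~\ref{penalty theorem}. On the compact set $\{\bar h : \epsilon \le \norm{\bar h - h^*} \le B_h\}$ the continuous, strictly positive map $(\bar h-h^*)^\top(F(\bar h)-F(h^*))$ attains a positive minimum $\delta_0$, so $\nabla V^\top X_{b\equiv0}\le -2\delta_0$ there, uniformly in $(\bar\lambda,\bar\mu)$. Reinstating $\bar b$ adds $-2(\bar h-h^*)^\top\bar b$, bounded in magnitude by a constant multiple of $B$; since $b^{N_k}=\mathbb{E}[\widehat{\beta}^{N_k}]\to 0$ as $N_k\to\infty$ by consistency of the CVaR estimator, choosing $N_k\ge N_\epsilon$ large enough makes this term smaller than $\delta_0$, so $\nabla V^\top X \le -\delta_0$ whenever $\norm{\bar h-h^*}\ge\epsilon$, which drives the primal component into $\cl(\NN_\epsilon(h^*))$ and yields~\eqref{lagrangian epsilon statement} with probability one.

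The step I expect to be the main obstacle is precisely this last one, not the Lie-derivative algebra (which is routine once the cancellations are spotted). Unlike in Proposition~\ref{penalty theorem}, where $V$ measured only $\norm{\bar h - h^*}$, here $V$ also tracks $\bar\lambda$ and $\bar\mu$, and $\nabla V^\top X$ is strictly negative only in the primal direction (the multipliers of active constraints and all equality multipliers are conservative). Consequently $\dot V<0$ outside the primal $\epsilon$-ball does not by itself make that ball invariant, so the ``stays there'' claim requires extra care. In the exact case $\bar b\equiv 0$ I would invoke LaSalle's invariance principle: $\{\dot V=0\}$ forces $\bar h=h^*$, and on any invariant subset $\dot{\bar h}\equiv 0$ together with LICQ pins the multipliers down to $(\lambda^*,\mu^*)$, so the maximal invariant set is the single KKT point and $\bar h\to h^*$. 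The perturbed case would then follow by combining the finite-time reaching estimate above with an input-to-state-stability argument bounding the excursions of the primal block in terms of $B$. Making this invariance/perturbation step rigorous is where the real work lies.
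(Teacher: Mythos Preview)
Your proposal is essentially the paper's own argument: same limiting ODE on the augmented state, same primal--dual Lyapunov function $V(\bar h,\bar\lambda,\bar\mu)=\norm{\bar h-h^*}^2+\norm{\bar\lambda-\lambda^*}^2+\norm{\bar\mu-\mu^*}^2$, and the same Lie-derivative estimate (the paper reaches it through slightly different bookkeeping, but with exactly the ingredients you list---affinity of $q$ and $l$, the KKT identities, and the componentwise inequality for the projected term $[q(\bar h)]^+_{\bar\lambda}$). After establishing $\nabla V^\top X_{b\equiv 0}<0$ whenever $\bar h\ne h^*$, the paper closes with a single sentence: ``The rest of the proof is analogous to the corresponding section of the proof of Proposition~\ref{penalty theorem}.''

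The obstacle you isolate at the end is therefore not something the paper actually resolves; it merely asserts the analogy without confronting the fact that here $V$ also carries the multiplier blocks, so $\{\norm{\bar h-h^*}\le\epsilon\}$ is not a sublevel set of $V$ and the ``reaches and stays there'' step from Proposition~\ref{penalty theorem} does not transfer verbatim. Your LaSalle/LICQ argument for the unperturbed case (the maximal invariant set in $\{\bar h=h^*\}$ is the single KKT point, since $\dot{\bar h}=0$ there forces $Dq^\top\bar\lambda+Dl^\top\bar\mu=-F(h^*)=Dq^\top\lambda^*+Dl^\top\mu^*$ and LICQ gives uniqueness) together with a perturbation argument is a natural way to close this gap; in flagging the issue and sketching a remedy you have been more careful than the paper's own proof.
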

 \begin{proof}
  Analogous to the proof of Proposition~\ref{penalty theorem}, the first step establishes convergence with probability one of 
  the sequence $\big\{(h^k, \lambda^k, \mu^k)\big\}$, in the sense of \eqref{convergence to trajectory}, to a trajectory $\big(\bar{h}(\cdot),\bar{\lambda}(\cdot),\bar{\mu}(\cdot)\big)$ of the following dynamics 
  \begin{subequations}\label{eq:ODE-h-lm}
  \begin{align}
        \dot{\bar{h}}(t) &= -H\big(\bar{h}(t), \bar{\lambda}(t), \bar{\mu}(t)\big)  - \bar{b}(t)    \\
          \dot{\bar{\lambda}}(t) &= \Big[q\big(\bar{h}(t)\big)\Big]_{\bar{\lambda}(t)}^+, \quad \dot{\bar{\mu}}(t) = l\big(\bar{h}(t)\big), \label{ODE lambda part}
 \end{align}
 \end{subequations}
 where $\bar{\lambda}(\cdot)$ is contained in the nonnegative orthant due to the projection. 
 The map $\bar{b}(\cdot)$ is uniformly bounded. Specifically, since $\{\widehat{\beta}^{N_k}\}$ is bounded, there exists a $B > 0$ such that $\norm{b^{N_k}} \leq B$ for all $k$, where ${b^{N_k} = \mathbb{E}[ \widehat{\beta}^{N_k}]}$. We then have $\norm{\bar{b}(t)} \le B$ for all $t$. 
The proof of convergence of the iterates to a continuous trajectory is similar to that of \cite[Theorem 5.2.2]{HJK-DSC:78} and is not repeated here for brevity. Next, we analyze the convergence of~\eqref{eq:ODE-h-lm}. We will occasionally use $\bar{x}$ as shorthand for $(\bar{h},\bar{\lambda},\bar{\mu})$.
     Define the Lyapunov function
    \begin{equation} \label{lyapunov function}
        V(\bar{h},\bar{\lambda}, \bar{\mu}) := \norm{\bar{h} - h^*}^2 + \norm{\bar{\lambda} - \lambda^*}^2 + \norm{\bar{\mu} - \mu^*}^2,
    \end{equation}
    where $h^*$ is the unique solution of $\VI(F,\HH)$ and $(h^*,\lambda^*, \mu^*)$ is an associated KKT point.
 We analyze the evolution of \eqref{lyapunov function} for the case $\bar{b} \equiv 0$. Denoting the right-hand side of~\eqref{eq:ODE-h-lm} by the map $X_{b \equiv 0}$, we get the Lie derivative of $V$ along~\eqref{eq:ODE-h-lm} as
    \begin{equation} \label{lagrange proof vdot}
    \begin{split}
    & \nabla V(\bar{x})^\top  X_{b \equiv 0}(\bar{x}) = -2(\bar{h} - h^*)^\top H(\bar{h},\bar{\lambda}, \bar{\mu})  \\ 
     \! \!+ 2(&\bar{\lambda}  \! - \! \lambda^*)^\top \! \big(q(\bar{h}) \! + \! [q(\bar{h})]_{\bar{\lambda}}^+ \! - \! q(\bar{h})\big) \! + \! 2(\bar{\mu} \! - \! \mu^*)^\top \! l(\bar{h}). 
    \end{split}
    \end{equation}
    Note that for any $j$, $([q(\bar{h})]_{\bar{\lambda}}^+)_j = (q(\bar{h}))_j$ if $\bar{\lambda}_j > 0$. Further, if $\bar{\lambda}_j = 0$, then $\bar{\lambda}_j - \lambda^*_j \le 0$. Thus we see that $(\bar{\lambda} \! - \! \lambda^*)^\top([q(\bar{h})]_{\bar{\lambda}}^+ \! - \! q(\bar{h})) \leq 0$. Since $q$ and $l$ are affine, we have $Dq(\bar{h}) = Dq(h^*)$ and $Dl(\bar{h}) = Dl(h^*)$ for all $\bar{h} \in \real^n$. Combining this with strict monotonicity we get for $\bar{h} \not = h^*$
    \begin{equation*}
        \begin{split}
            0&<(\bar{h} - h^*)^\top \big(H(\bar{h},\bar{\lambda}, \bar{\mu})
            - H(h^*,\bar{\lambda}, \bar{\mu}) \big) \\
            &=(\bar{h} - h^*)^\top \big(H(\bar{h},\bar{\lambda}, \bar{\mu})
             - H(h^*,\lambda^*, \mu^*)\\ 
            &+ Dq(h^*)^\top\lambda^* - Dq(h^*)^\top\bar{\lambda} + Dl(h^*)^\top\mu^* - Dl(h^*)^\top\bar{\mu}\big). \\
        \end{split}
    \end{equation*}
    Using \eqref{KKT}, and the assumption that functions are affine gives
    \begin{equation*} 
    \begin{split}
                &- (\bar{h}  -  h^*)^\top H(\bar{h},\bar{\lambda}, \bar{\mu})    \\    
        &<  
        (\lambda^* - \bar{\lambda})^\top \big(q(\bar{h}) - q(h^*)\big) + (\mu^* - \bar{\mu})^\top \big(l(\bar{h}) - l(h^*) \big).
    \end{split}
    \end{equation*}
    Combining these derivations, we get that for $\bar{h} \not = h^*$,
    \begin{equation*}
        \nabla V(\bar{x})  X_{b \equiv 0}(\bar{x})  <  2(\bar{\lambda} -\lambda^{*} )^\top  q(h^*)  +  2(\bar{\mu} -\mu^{*})^\top  l(h^*).
    \end{equation*}
    From \eqref{KKT} we have $2\lambda^{*\top }q(h^*) = 0$, $2\bar{\lambda}^\top q(h^*) \leq 0$ and $l(h^*) = 0$, which implies $\nabla V(\bar{h},\bar{\lambda},\bar{\mu})  X_{b \equiv 0}(\bar{h},\bar{\lambda},\bar{\mu}) < 0$ whenever $\bar{h} \not = h^*$. The rest of the proof is analogous to the corresponding section of the proof of Proposition \ref{penalty theorem}.
 \end{proof}
 \begin{remark} \longthmtitle{Generalizations of Proposition~\ref{lagrange theorem}} \label{remark generalization}
    In Proposition \ref{lagrange theorem} we require boundedness of $\{h^k\}$, $\{\lambda^k\}$ and $\{\mu^k\}$. 
    Similar to the case in Remark \ref{remark on projection}, when upper bounds on $\norm{\lambda^*}$ and $\norm{\mu^*}$ are known beforehand,
    projection onto hyper-rectangles can be used to ensure boundedness, while the result remains valid, (cf.~\cite[Theorem 5.2.2]{HJK-DSC:78}).
    \oprocend
 \end{remark}
\subsection{Estimation error, sample sizes, and accuracy}
For both algorithms \eqref{penalty algorithm} and \eqref{Lagrangian algorithm}, the convergence depends on the bias of the used estimator, given by ${b_{N_k} := \mathbb{E}[\widehat{\beta}^{N_k}]}$. When $F$ is assumed to be strongly monotone, we can give an explicit bound on $\norm{b^{N_k}}$ sufficient for ensuring convergence.
\begin{corollary} \label{corollary bound on bias} \longthmtitle{Estimation error bounds under strong monotonicity}
    Assume that $F$ is strongly monotone with constant $c_F$. For given sequences $\{h^k\}$ and $\{N^k\}$, define 
    \begin{align}\label{eq:hmax}
        &h_+ := \max_{h \in \{h^k\},h' \in \HH}\norm{h - h'}, &b_{N_k} = \mathbb{E}[\widehat{\beta}^{N_k}].
    \end{align}
    The following then hold for any $\epsilon > 0$:
    \begin{enumerate}
        \item Assume the conditions of Proposition \ref{penalty theorem} and consider $\{h^k\}$ generated by \eqref{penalty algorithm}. Let $h_+$ and $b_{N_k}$ be given by~\eqref{eq:hmax} and let ${d = c_d\frac{\norm{F(h^*)}}{2}}$, with $c_d > 1$. Then
        \begin{equation*}
        \norm{b^{N_k}} < (1 - \frac{1}{c_d})\frac{c_F\epsilon^2}{h_+} \text{ for all } k \in \naturalnumbers
        \end{equation*}
        implies $\lim_{k \rightarrow \infty} \norm{h^k - h^*} \leq \epsilon$ with probability~one.
        \item Assume the conditions of Proposition \ref{lagrange theorem} and consider $\{h^k\}$ generated by \eqref{Lagrangian algorithm}. Let $h_+$ and $b_{N_k}$ be given by \eqref{eq:hmax}. Then
        \begin{equation*}
        \norm{b^{N_k}} < \frac{c_F\epsilon^2}{h_+} \text{ for all } k \in \naturalnumbers
        \end{equation*}
         implies $\lim_{k \rightarrow \infty} \norm{h^k - h^*} \leq \epsilon$ with probability~one. 
    \end{enumerate}
\end{corollary}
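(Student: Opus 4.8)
The plan is to treat both parts as quantitative refinements of Propositions~\ref{penalty theorem} and~\ref{lagrange theorem}: those proofs already reduce convergence to showing that the Lie derivative of the relevant Lyapunov function is strictly negative on the complement of $\NN_\epsilon(h^*)$, so I only need to redo that single step with \emph{strong} rather than strict monotonicity, which upgrades the qualitative strict inequality into an explicit decrease of rate $2c_F\norm{\bar{h}-h^*}^2$. Once negativity of the Lie derivative outside $\NN_\epsilon(h^*)$ is in hand, the ``reaches $\cl(\NN_\epsilon(h^*))$ in finite time and stays'' argument of those propositions carries over unchanged and delivers the claimed limit $\lim_{k\to\infty}\norm{h^k-h^*}\le\epsilon$ with probability one. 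Throughout I will set $B:=\sup_k\norm{b^{N_k}}$, use that the hypothesized bias bound gives $\norm{\bar{b}(t)}\le B$ for the continuous-time bias, and use that $h^*\in\HH$ together with the definition of $h_+$ in~\eqref{eq:hmax} yields $\norm{\bar{h}-h^*}\le h_+$ along the bounded limiting trajectory.

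I would dispatch the multiplier-driven case (part 2) first, as it is cleanest. Retracing the chain of inequalities in the proof of Proposition~\ref{lagrange theorem}, I replace the strict bound $0<(\bar{h}-h^*)^\top\big(H(\bar{h},\bar{\lambda},\bar{\mu})-H(h^*,\bar{\lambda},\bar{\mu})\big)$ with the strong-monotonicity bound $c_F\norm{\bar{h}-h^*}^2\le(\bar{h}-h^*)^\top\big(H(\bar{h},\bar{\lambda},\bar{\mu})-H(h^*,\bar{\lambda},\bar{\mu})\big)$, which is valid because $q,l$ affine make the multiplier terms cancel (leaving exactly $F(\bar{h})-F(h^*)$). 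The same cancellations using~\eqref{KKT} then give $\nabla V^\top X_{b\equiv0}\le -2c_F\norm{\bar{h}-h^*}^2$. Adding back the bias, which enters only the $\bar{h}$-dynamics, yields $\nabla V^\top X\le -2c_F\norm{\bar{h}-h^*}^2+2\norm{\bar{h}-h^*}\,\norm{\bar{b}}$. On the complement of $\NN_\epsilon(h^*)$ I bound $\norm{\bar{h}-h^*}^2\ge\epsilon^2$ and $\norm{\bar{h}-h^*}\,\norm{\bar{b}}\le h_+B$, so the Lie derivative is strictly negative precisely when $B<c_F\epsilon^2/h_+$, matching the stated hypothesis.

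For the penalty-driven case (part 1) I would reproduce the computation in the proof of Proposition~\ref{penalty theorem} up to the inequality $\nabla V^\top X_{b\equiv0}\le 2\norm{\bar{h}-\tilde{h}}\,\norm{F(h^*)}-2d\norm{F(h^*)}\,\norm{\bar{h}-\tilde{h}}^2-2c_F\norm{\bar{h}-h^*}^2$, where the final term now comes from strong monotonicity in place of the compactness constant used there. Viewing the first two terms as a function of $\norm{\bar{h}-\tilde{h}}$ and maximizing gives a residual penalty contribution that the choice $d=c_d\norm{F(h^*)}/2$ renders proportional to $1/c_d$; balancing this residual together with the bias term $2h_+B$ against the strong-monotonicity decrease $2c_F\epsilon^2$ on the complement of $\NN_\epsilon(h^*)$ produces the threshold $B<(1-\tfrac{1}{c_d})\,c_F\epsilon^2/h_+$. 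The factor $(1-\tfrac{1}{c_d})$ encodes the fraction of the decrease budget consumed by the penalty residual, and the requirement $c_d>1$ is exactly what keeps this threshold positive.

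The hard part will be the bookkeeping in part 1: one must track the precise maximal contribution of the penalty cross-term $2\norm{\bar{h}-\tilde{h}}\,\norm{F(h^*)}$, combine the projection inequalities $(\tilde{h}-h^*)^\top F(h^*)\ge 0$ and $(\tilde{h}-h^*)^\top(\bar{h}-\tilde{h})\ge 0$ with $\norm{\bar{h}-\tilde{h}}\le\norm{\bar{h}-h^*}$, and then verify that, after subtracting this residual, the leftover strong-monotonicity decrease still dominates the bias uniformly over the complement of $\NN_\epsilon(h^*)$; extracting the exact constant $(1-1/c_d)$ from this balance is the delicate step. The remaining ingredient---that negativity of the Lie derivative outside the neighborhood forces $\limsup_k\norm{h^k-h^*}\le\epsilon$---is inherited directly from the invariance argument already established in the two propositions, so I would not redo it.
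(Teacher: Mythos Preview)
Your proposal is correct and follows essentially the same route as the paper's own proof: replace strict monotonicity by strong monotonicity in the Lie-derivative computations of Propositions~\ref{penalty theorem} and~\ref{lagrange theorem} to obtain the explicit decrease $-2c_F\norm{\bar{h}-h^*}^2$, set $\delta_0=2c_F\epsilon^2$ on the complement of $\NN_\epsilon(h^*)$, and then balance the penalty residual and bias term against this budget using $h_+$. The paper's proof is terser (it simply invokes the constants $\delta_0,\delta_1$ from the proof of Proposition~\ref{penalty theorem} and plugs in), but the substance is the same.
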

\begin{proof}
 For the first statement the proof is analogous to that of Proposition \ref{penalty theorem}. Due to strong monotonicity, we have
  \begin{equation*}
        -2(\bar{h} - h^*)^\top F(\bar{h}) \leq -2c_F\norm{\bar{h} - h^*}^2.
     \end{equation*}
    Setting $\delta_0 = 2c_F\epsilon^2$ in the proof of Proposition \ref{penalty theorem}, we get $\delta_1 = (1 - \frac{1}{c_d})2c_F\epsilon^2$. Since $\{h^k\}$ is bounded by assumption, we have $h_+ < \infty$. The result then follows. 
    Similar reasoning holds for \eqref{Lagrangian algorithm}.
\end{proof}

We would now like to translate the condition imposed in the above result on $\norm{b^{N_k}}$ into a condition on the sample requirement $N_{\epsilon}$. To this end, under compactness, we give a result supplying a bound on $\norm{b^{N_k}}$ depending on $N^k$. 
\begin{lemma} \longthmtitle{Relation between estimation error and sample size} \label{lemma bound}
Let $F$ be as defined in \eqref{definition elements F}, where ${C_i(h,u) \in [z_1, z_2]}$, $z_2 \ge z_1$, for all $h,u$ and $i$. Then,
for ${b^{N_k} = \mathbb{E}\big[F(h^k) - \widehat{F}^{N_k}(h^k)\big]}$, we have
\begin{equation*}
    \norm{b^{N_k}} \leq \frac{3}{2}\sqrt{\frac{5 n \pi}{N_k \alpha}}(z_2 - z_1).
\end{equation*}
\end{lemma}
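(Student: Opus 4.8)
The plan is to reduce the bound on the vector bias \(\norm{b^{N_k}}\) to a scalar concentration estimate for each coordinate of the empirical CVaR, and then to convert that tail bound into a bound on the mean absolute deviation through the layer-cake formula \(\mathbb{E}|X| = \int_0^\infty \mathbb{P}(|X| > s)\,ds\).

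First I would move from the Euclidean norm to the coordinates. The \(i\)-th component of \(b^{N_k} = \mathbb{E}[F(h^k) - \widehat{F}^{N_k}(h^k)]\) is \(b_i^{N_k} = \mathbb{E}[F_i(h^k) - \widehat{F}_i^{N_k}(h^k)]\), so \(\norm{b^{N_k}} = \big(\sum_{i=1}^n (b_i^{N_k})^2\big)^{1/2} \le \sqrt{n}\,\max_i |b_i^{N_k}|\). It therefore suffices to show \(|b_i^{N_k}| \le \tfrac{3}{2}(z_2 - z_1)\sqrt{5\pi/(N_k\alpha)}\) for each \(i\).

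For a fixed coordinate, Jensen's inequality gives \(|b_i^{N_k}| \le \mathbb{E}\big|\widehat{F}_i^{N_k}(h^k) - F_i(h^k)\big|\). To handle the fact that \(h^k\) is random and the estimator depends on it, I would condition on \(h^k\): since the hypothesis \(C_i(h,u)\in[z_1,z_2]\) holds uniformly in \(h\) and \(u\), the concentration bound invoked below is valid for every fixed \(h\), so \(\mathbb{E}\big[\,|\widehat{F}_i^{N_k}(h) - F_i(h)|\ \big|\ h^k = h\,\big]\) is bounded uniformly in \(h\), and the tower property transfers this bound to the unconditional expectation. The core ingredient is the sub-Gaussian concentration inequality for the empirical CVaR of a bounded random variable from \cite{YW-FG:10,RPR-NDS:10}: writing \(X := \widehat{F}_i^{N_k}(h) - F_i(h)\), these results yield a two-sided tail of the form \(\mathbb{P}(|X| > s) \le 3\exp\!\big(-N_k\alpha s^2/(5(z_2-z_1)^2)\big)\) for all \(s \ge 0\). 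Integrating this through the layer-cake identity, with \(\int_0^\infty e^{-s^2/D}\,ds = \tfrac{1}{2}\sqrt{\pi D}\) for \(D = 5(z_2-z_1)^2/(N_k\alpha)\), produces \(\mathbb{E}|X| \le \tfrac{3}{2}(z_2-z_1)\sqrt{5\pi/(N_k\alpha)}\). Combining this per-coordinate estimate with the \(\sqrt{n}\) factor from the first step gives the claimed bound.

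The main obstacle, and the only nonroutine part, is securing the scalar deviation inequality with precisely the constants that reproduce the stated estimate: the \(1/\sqrt{\alpha}\) (rather than \(1/\alpha\)) dependence forces a tail whose rate scales like \(N_k\alpha/(z_2-z_1)^2\), and the numerical constants \(3\) and \(5\) must be read off from the cited CVaR concentration bounds. Everything downstream is a one-line Jensen step followed by a Gaussian integral. A secondary but genuine subtlety is the measurability of \(h^k\) with respect to the samples, which I resolve by the uniform-in-\(h\) boundedness and the conditioning argument above.
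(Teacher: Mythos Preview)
Your argument follows the same skeleton as the paper's proof: reduce to a single coordinate, invoke a sub-Gaussian tail for the empirical CVaR of a bounded variable, and integrate via the layer-cake representation to bound the expected deviation. The $\sqrt{n}$ factor and the Gaussian integral step are identical in both.

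The one substantive difference is how you pass from the signed bias to a quantity the tail bound controls. You apply Jensen to get $|b_i^{N_k}| \le \mathbb{E}|X|$ and then invoke a \emph{two-sided} bound $\mathbb{P}(|X|>s)\le 3\exp(-N_k\alpha s^2/(5(z_2-z_1)^2))$. The paper instead uses the structural fact that the empirical CVaR is a downward-biased estimator, i.e.\ $\mathbb{E}[\widehat{\CVaR}_\alpha[Z]]\le \CVaR_\alpha[Z]$ \cite[Eq.~(5.22)]{AS-DD-AR:14}, so $b_i^{N_k}\ge 0$ and $|b_i^{N_k}|=\mathbb{E}[F_i-\widehat{F}_i]\le \mathbb{E}[F_i-\widehat{F}_i]_+$, after which only the \emph{one-sided} tail $\mathbb{P}[\CVaR-\widehat{\CVaR}\ge s]\le 3\exp(-\tfrac{1}{5}\alpha (s/(z_2-z_1))^2 N_k)$ from \cite[Theorem~3.1]{YW-FG:10} is required. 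That one-sided inequality is exactly what the cited reference provides with the constant $3$; a two-sided version with the same leading constant is not what \cite{YW-FG:10} states, and combining the two directions would typically inflate the prefactor. Your route is therefore correct in method, but to recover the stated constant $\tfrac{3}{2}\sqrt{5n\pi/(N_k\alpha)}$ you either need to justify the two-sided bound with constant $3$ independently, or---more cleanly---replace the Jensen step by the sign-of-bias observation the paper uses. Your remark on conditioning on $h^k$ and using uniformity of the bound in $h$ is a valid point that the paper leaves implicit.
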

\begin{proof}
We will proof the result for the case $n = 1$. The generalization to $n > 1$ is straightforward. For a scalar random variable $Z$, we have $\mathbb{E}\big[\widehat{\CVaR}_\alpha[Z]\big] \leq \CVaR_\alpha[Z]$ \cite[Equation 5.22]{AS-DD-AR:14}. Therefore, 
\begin{align*}
    \Big| \mathbb{E}\big[\CVaR_\alpha[Z] &- \widehat{\CVaR}_\alpha[Z]  \big] \Big| \\
    & = \mathbb{E}\big[\CVaR_\alpha[Z] - \widehat{\CVaR}_\alpha[Z]  \big]    \\     
    &\leq \mathbb{E}\big[\CVaR_\alpha[Z] - \widehat{\CVaR}_\alpha[Z] \big]_+.
\end{align*}
From \cite[Theorem 3.1]{YW-FG:10}, we have the concentration bound
\begin{equation} \label{equation concentration bound}
    \mathbb{P}\big[\CVaR_\alpha[Z] - \widehat{\CVaR}_\alpha[Z] \! \geq \! z \big] \leq 3 e^{-\frac{1}{5}\alpha \left(\frac{z}{z_2 - z_1}\right)^2 N}.
\end{equation}
Thus we have
\begin{align*}
    &\mathbb{E}\big[\CVaR_\alpha[Z] - \widehat{\CVaR}_\alpha[Z] \big]_+  \\
    &= \int_0^{\infty}\mathbb{P}\big[\CVaR_\alpha[Z] - \widehat{\CVaR}_\alpha[Z]\big] \geq z]dz \\
    &\leq \frac{3}{2}\sqrt{\frac{5 \pi}{N_k \alpha}}(z_2 - z_1).
\end{align*}
The last inequality can be obtained by calculating the integral of the right-hand side of \eqref{equation concentration bound} using polar coordinates. The details of this derivation are omitted. The result then follows.
\end{proof}
 The above result leads to a lower bound on $N_k$, for both algorithms \eqref{penalty algorithm} and \eqref{Lagrangian algorithm}, that ensures convergence to $\NN_\epsilon(h^*)$.
\begin{corollary} \longthmtitle{Sample size bounds under strong monotonicity}
Let $F$ be strongly monotone with constanct $c_F$, and assume the conditions of Proposition \ref{penalty theorem} (resp. Proposition \ref{lagrange theorem}) and Lemma \ref{lemma bound} hold. For $\{h^k\}$ generated by \eqref{penalty algorithm} (resp. \eqref{Lagrangian algorithm}) define $h_+$, $b_{N_k}$, and $c_d$ as in Corollary \ref{corollary bound on bias}. For $\epsilon > 0$
\begin{align*}
    N_k  &> \frac{45 n \pi}{4\alpha}  \left(\frac{h_+ (z_2 - z_1)}{\epsilon^2 c_F  (1 - \frac{1}{c_d})}\right)^2  \text{ for all } k \in \naturalnumbers,
    \\
    (\text{resp. } N_k  &>  \frac{45 n \pi}{4\alpha} \left(\frac{h_+ (z_2 - z_1)}{\epsilon^2 c_F }\right)^2 \text{ for all } k \in \naturalnumbers,)
\end{align*}
implies ${\lim_{k \rightarrow \infty}\norm{h^k - h^*} \leq \epsilon}$ with probability one.
\end{corollary}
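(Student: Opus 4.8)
The plan is to chain the two preceding results, Corollary~\ref{corollary bound on bias} and Lemma~\ref{lemma bound}, so that the problem reduces to elementary algebra. Corollary~\ref{corollary bound on bias} already supplies, under strong monotonicity, a threshold on the bias $\norm{b^{N_k}}$ that is sufficient for convergence to $\NN_\epsilon(h^*)$: for the penalty-driven scheme~\eqref{penalty algorithm} this threshold is $(1 - \tfrac{1}{c_d})\tfrac{c_F\epsilon^2}{h_+}$, and for the multiplier-driven scheme~\eqref{Lagrangian algorithm} it is $\tfrac{c_F\epsilon^2}{h_+}$. Lemma~\ref{lemma bound}, on the other hand, bounds the same bias from above in terms of the per-iteration sample size, namely $\norm{b^{N_k}} \le \tfrac{3}{2}\sqrt{\tfrac{5n\pi}{N_k\alpha}}(z_2 - z_1)$ whenever the costs lie in $[z_1,z_2]$.

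First I would impose that the sample-size-dependent upper bound of Lemma~\ref{lemma bound} be strictly smaller than the convergence threshold of Corollary~\ref{corollary bound on bias}. For the penalty-driven algorithm this is the requirement
\begin{equation*}
    \frac{3}{2}\sqrt{\frac{5n\pi}{N_k\alpha}}(z_2 - z_1) < \Big(1 - \frac{1}{c_d}\Big)\frac{c_F\epsilon^2}{h_+}.
\end{equation*}
Squaring both sides (both are nonnegative) and solving for $N_k$ yields exactly the stated bound $N_k > \tfrac{45n\pi}{4\alpha}\big(\tfrac{h_+(z_2-z_1)}{\epsilon^2 c_F(1 - 1/c_d)}\big)^2$, using $\tfrac{9}{4}\cdot 5 = \tfrac{45}{4}$. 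Repeating the identical manipulation with the threshold $\tfrac{c_F\epsilon^2}{h_+}$, that is, dropping the factor $1 - \tfrac{1}{c_d}$, gives the companion bound for the multiplier-driven algorithm. Since the stated lower bound on $N_k$ is required to hold for every $k$, the inequality $\norm{b^{N_k}} < (1 - \tfrac{1}{c_d})\tfrac{c_F\epsilon^2}{h_+}$ (resp.\ $\norm{b^{N_k}} < \tfrac{c_F\epsilon^2}{h_+}$) holds uniformly in $k$, and the conclusion $\lim_{k\to\infty}\norm{h^k - h^*}\le\epsilon$ with probability one then follows directly from Corollary~\ref{corollary bound on bias}.

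There is essentially no obstacle here beyond bookkeeping, since the two ingredients are already established and the corollary is their immediate composition. The one point requiring minor care is the uniformity over $k$: Lemma~\ref{lemma bound} bounds $\norm{b^{N_k}}$ at each iterate $h^k$, and the quantity $h_+$ defined in~\eqref{eq:hmax} is precisely the worst-case distance over the (bounded) iterate sequence that makes the threshold from Corollary~\ref{corollary bound on bias} valid simultaneously for all $k$. Boundedness of $\{h^k\}$, assumed in both Proposition~\ref{penalty theorem} and Proposition~\ref{lagrange theorem}, guarantees $h_+ < \infty$, so the displayed lower bounds on $N_k$ are finite and the argument closes.
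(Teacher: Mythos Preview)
Your proposal is correct and follows exactly the approach intended by the paper: the corollary is stated without an explicit proof, being presented as the immediate composition of Corollary~\ref{corollary bound on bias} and Lemma~\ref{lemma bound}, and your chaining of the bias threshold with the sample-size bound on $\norm{b^{N_k}}$ reproduces the stated inequalities after the same algebraic manipulation.
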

\section{Simulations} \label{section simulations}
\begin{figure}     
    \centering
    \includegraphics[width = 0.9\columnwidth]{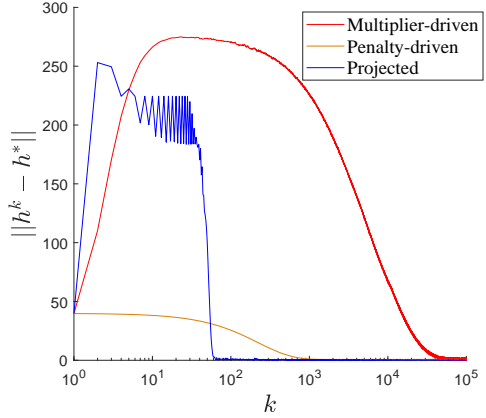}
    \vspace*{-1ex}
    \caption{Plot illustrating the convergence of the algorithms for the example given in Section~\ref{section simulations}.
    All algorithms performed 200000 iterations, and  
    $N_k = 100$. For the penalty-driven algorithm, we set ${c = 30000}$. We used $\gamma^k = \frac{1}{k}$, $\gamma^k = \frac{1000}{k + 10^7}$ and $\gamma^k = \frac{1000}{k + 2*10^5}$ for the projected, penalty-driven and multiplier driven algorithm, respectively.}
\label{figure logscale}
\end{figure}
\begin{figure}
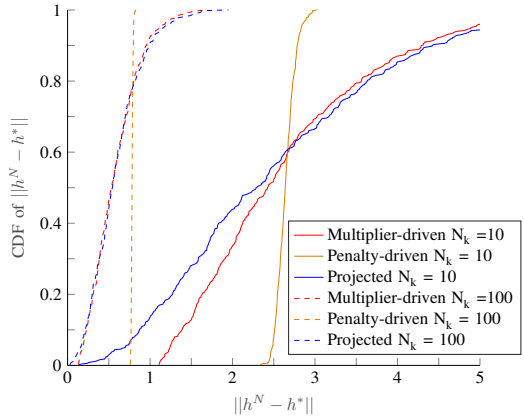
     
    \centering
    \vspace*{-36ex}
    \include{combined}
    \vspace*{-4ex}
    \caption{This plot illustrates the convergence of the discussed algorithms to $h^* \in \SOL(F,\HH)$ of an example $\VI$ (cf. Section \ref{section simulations}). The lines show the CDF attained for the different algorithms using 500 runs. On the horizontal axis we have the norm of the difference between the final iterate $h^N$ and $h^*$, and on the vertical axis we have the fraction of runs that achieved that precision. For the projected, penalty-driven and multiplier driven algorithm with $N_k = 10$, we used $N=200$, $N=1000$ and $N=200000$ respectively, and for $N_k = 100$ we set $N=1400$ for the penalty-driven algorithm. Different values are used since the algorithms require a different number of iterations to converge. For the penalty-driven algorithm we used $c = 10000$ and $c = 30000$ for the cases $N_k = 10$ and $N_k = 100$. For all runs the initial error is given by ${\norm{h^0- h^*} = 39.72}$. We note that the all curves increase to $1$ on the vertical axis. This is not shown here as we focus on a smaller $x$-axis range to better emphasize all curves.}
    \vspace*{-4ex}
\label{figure CDF_N500}
\end{figure}

Here we demonstrate the application of the stochastic approximation algorithms for finding the solutions of a CVaR-based variational inequality. The example is an instance of a $\CVaR$-based routing game as introduced in Section \ref{section routing game}. The example discussed is taken from \cite{AC:19-cdc}, where it was adapted from \cite[Section 6.3]{YX-UVS:15}. It consists of a simple network of two nodes $\VV = \{A,B\}$, and five edges. The edges $\{1,2,3\}$ go from $A$ to $B$, and edges $\{4,5\}$ go from $B$ to $A$. Then $\PP = \{1,2,3,4,5\}$. The demand equals 260 from $A$ to $B$, and 170 from $B$ to $A$, giving us the feasisble set
\begin{equation*}
    \HH = \setdef{h \in \realnonnegative^5}{h_1 + h_2 + h_3 = 260, \enskip h_4 + h_5 = 170}.
\end{equation*}
The cost functions are given by
\begin{equation*}
    C(h,u) = \left(\begin{array}{c}
    40h_1 + 20h_4 + 1000 + 3000u_1 \\
    60h_2 + 20h_5 + 950\\
    80h_3 + 3000\\
    8h_1 + 80h_4 + 1000 + 4000u_2\\
    4h_2 + 100h_5 + 1300
    \end{array}\right),
\end{equation*}
where the variables $u_1$ and $u_2$ model the uncertainty in the system, and are independent and uniformly distributed random variables on $[0,1]$. Setting $\alpha = 0.2$, we have defined the considered $\CVaR$-based routing game. From \cite{AC:19-cdc}, the CWE (see Section \ref{section routing game}) of this routing game is given by ${h^* = (89.52,98.39,72.09,74.32,95.68)}$.

Figure \ref{figure logscale} shows the evolution of the error of the considered algorithms for a single run. Different step-size sequence were used, in order to avoid unstable behaviour. The figure shows that all algorithms converge to a neighborhood of the solution of the VI, albeit with a different number of iterations.
Using the same step-size sequences as for Figure~\ref{figure logscale}, Figure~\ref{figure CDF_N500} shows the empirical cumulative distribution function (CDF) of the distance of the last iterate of the algorithm to the solution. We infer that as the number of samples used per iteration increases, the last iterates gets closer to the solution. Note that even though $N_k$ does not grow unboundedly, the projected algorithm still converges. 
\section{Conclusions} \label{section conclusion}
We have considered variational inequalities defined by the CVaR of cost functions and provided two stochastic approximation algorithms for solving them. We have analyzed the asymptotic convergence of these algorithms when, at each iteration, only finite number of samples are used to estimate the CVaR. We have carefully specified the trade-off between the sample requirement and the accuracy of the algorithms.

Future work will focus on analyzing the finite-time properties of the introduced algorithms. We wish to also explore input-to-state stability of projected dynamical systems. 
\bibliography{bibfile}
\bibliographystyle{ieeetr}
\end{document}